\newtheorem{theorem}{Theorem}[section]
\newtheorem{lemma}[theorem]{Lemma}
\newtheorem{proposition}[theorem]{Proposition}
\newtheorem{corollary}[theorem]{Corollary}
\newtheorem{facts}[theorem]{Facts}
\newtheorem{question}[theorem]{Question}
\theoremstyle{definition}
\newtheorem{definition}[theorem]{Definition}
\newtheorem{definitions}[theorem]{Definitions}
\newtheorem{example}[theorem]{Example}
\newtheorem{remark}[theorem]{Remark}
\newtheorem{defnfacts}[theorem]{Definitions and Facts}
\numberwithin{equation}{section}
\newcommand{\N}{\mathbb{N}}
\newcommand{\R}{\mathbb{R}}
\newcommand{\Wstar}{\mathbf{W}^*}
\newcommand{\pperp}{\perp\perp}
\newcommand{\Int}{{\rm int}}
\DeclareMathOperator{\cl}{cl}
\DeclareMathOperator{\coz}{coz}
\DeclareMathOperator{\M}{(M)}
\DeclareMathOperator{\Y}{(Y)}
\DeclareMathOperator{\wk}{\Wstar k}
\DeclareMathOperator{\CR}{(CR)}
\DeclareMathOperator{\HA}{(HA)}
\DeclareMathOperator{\Proj}{(Pr)}
\DeclareMathOperator{\aN}{\alpha\mathbb{N}}
\newcommand{\Mart}{Mart\'{i}nez }
\theoremstyle{definition}
\theoremstyle{definition}
\theoremstyle{definition}
\theoremstyle{definition}
\theoremstyle{definition}
\theoremstyle{definition}
\theoremstyle{remark}
\theoremstyle{definition}
\theoremstyle{definition}
\begin{document}
\title[${\bf{W}^*}$: cozero-sets and ideals]{Archimedean $\ell$-groups with strong unit: cozero-sets and coincidence of types of ideals}

\author{Papiya Bhattacharjee, \quad Anthony W. Hager, \quad Warren Wm. McGovern, \quad Brian Wynne   }
\address{Department of Mathematical Sciences, Charles E. Schmidt College of Science, Florida Atlantic University, Boca Raton, FL 33431, USA}
\address{Department of Mathematics, Oregon State University, Corvallis, OR 97331-4605, USA}
\address{Wilkes Honors College, Florida Atlantic University, Jupiter, FL 33458, USA}
\email{pbhattacharjee@fau.edu (P. Bhattacharjee)}
\email{}
\email{warren.mcgovern@fau.edu (W. Wm. McGovern)}
\subjclass[2010]{}
\keywords {}



\begin{abstract}
$\Wstar$ is the category of the title. For $G \in \Wstar$, we have the canonical compact space $YG$, and Yosida representation $G \leq C(YG)$, thus, for $g \in G$, the cozero-set $\coz(g)$ in $YG$. The ideals at issue in $G$ include the principal ideals and polars, $G(g)$ and $g^{\perp \perp}$, respectively, and the $\Wstar$-kernels of $\Wstar$-morphisms from $G$. The ``coincidences of types" include these properties of $G$: (M) Each $G(g) = g^{\perp \perp}$; (Y) Each $G(g)$ is a $\Wstar$-kernel; (CR) Each $g^{\perp \perp}$ is a $\Wstar$-kernel (iff each $\coz(g)$ is regular open). For each of these, we give numerous ``rephrasings", and examples, and note that (M) = (Y) $\cap$ (CR). This paper is a companion to a paper in preparation by the present authors, which includes the present thrust in contexts less restrictive and more algebraic. Here, the focus on $\Wstar$ brings topology to bear, and sharpens the view.
\end{abstract}

\maketitle

\thispagestyle{empty}

\section{Introduction}\label{sect0}

``$\ell$-group" means ``lattice-ordered group", and the theory is exposed in \cite{D95}, \cite{AF89}, \cite{BKW77}, where undefined terms may be found.

\begin{definitions}
In an $\ell$-group $G$, let us say Abelian, an \emph{ideal} $I$ is the kernel of homomorphism (i.e., an $\ell$-group homomorphism) out of $G$, which means $I$ is a convex sub-$\ell$-group of $G$. Here, for each $g \in G$, we have the principal ideal $G(g) = \{ f \in G \mid \exists n \in \mathbb{N}, \vert f \vert \leq n\vert g \vert \}$ ($\mathbb{N}$ is the positive integers), and the principal polar $g^{\pperp}$. $g^{\perp} = \{ f \in G \mid \vert f \vert \wedge \vert g \vert = 0 \}$, thus $g^{\pperp} = (g^{\perp})^{\perp}$ (where, for $S \subseteq G$, $S^{\perp} = \bigcap \{ s^{\perp} \mid s \in S \}$.

Now, the class $\M = \{ G \mid \forall g \in G, G(g) = g^{\pperp} \}$ is defined.

Next, an ideal $I$ in $G$ is maximal (in the usual sense) if and only if $G/I$ embeds into the real numbers $\R$ (H\"{o}lder's Theorem).

Then, the class $\Y = \{ G \mid G(g) \mbox{ is an intersection of maximal ideals} \}$. (The description of $\Y$ in the Abstract will develop).

The notations (M) and (Y) are explained in \cite{BHMWinf}.
\end{definitions}

Now we restrict to $\Wstar$: $(G,u) \in \Wstar$ means $G$ is Archimedean and $u \in G$ is a non-negative strong unit ($G(u) = G$). A $\Wstar$-morphism $(G,u) \xrightarrow{\varphi} (H,v)$ is an $\ell$-group homomorphism with $\varphi(u) = v$. $Y(G,u)$ is $\{ I \mid I \mbox{ is an ideal maximal for } u \notin I \}$. 

\begin{theorem}\label{1.2}
(Mostly Yosida; see \cite{BH89} or \cite{HR77}.)
\begin{itemize}
\item[(a)] Given the hull-kernel topology, $Y(G,u)$ is compact Hausdorff, and there is the $\Wstar$-embedding $(G,u) \hookrightarrow (C(Y(G,u)), 1)$ with the image of $(G,u)$ 0-1 separating closed sets of $Y(G,u)$. The space $Y(G,u)$ is unique for that data. We simply write ``$G \leq C(Y(G,u))$", and sometimes just $YG$ for $Y(G,u)$.
\item[(b)] If $(G,u) \xrightarrow{\varphi} (H,v)$ is a $\Wstar$-morphism, there is unique continuous $YG \xleftarrow{\tau} YH$ for which $\varphi(g)$ in $C(YG)$ is $\varphi(g) = g \circ \tau$, for each $g$. $\varphi$ is onto if and only if $\tau$ is one-to-one (thus a homeomorphic embedding, and we may say ``$YG \supseteq YH$").
\end{itemize}
\end{theorem}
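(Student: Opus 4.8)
The plan is to build $Y(G,u)$ concretely as a space of real-valued characters and then read both parts off the weak topology it carries. Since $u$ is a strong unit, $G(u)=G$, so an ideal $I$ is maximal for $u\notin I$ exactly when it is a maximal proper ideal; by H\"{o}lder's Theorem each such $I$ gives an embedding $G/I\hookrightarrow\R$, which becomes unique once we normalize by $u\mapsto 1$. I therefore identify the points of $Y(G,u)$ with the $\ell$-homomorphisms $\chi\colon G\to\R$ satisfying $\chi(u)=1$, and for $g\in G$ I define $\hat g\colon Y(G,u)\to\R$ by $\hat g(\chi)=\chi(g)$. Because each $\chi$ is an $\ell$-homomorphism, $g\mapsto\hat g$ is an $\ell$-homomorphism of $G$ into $\R^{Y(G,u)}$ carrying $u$ to the constant $1$.

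For part (a) I take the hull-kernel topology to be the weak topology generated by the $\hat g$, equivalently the subspace topology inherited from $\R^{G}$ via $\chi\mapsto(\chi(g))_{g}$; the generating closed sets $\{\chi:\chi(g)=0\}$ are exactly the zero-sets of the $\hat g$, matching the hull-kernel description. Each $\hat g$ is bounded, since $|g|\le nu$ gives $|\hat g|\le n$, so the image lands in the compact cube $\prod_{g}[-n_g,n_g]$. \textbf{The first main step is compactness:} the conditions defining a normalized $\ell$-homomorphism (additivity, preservation of $\wedge$ and $\vee$, and $\chi(u)=1$) are each closed under pointwise limits, so the set of such $\chi$ is closed in that cube and hence compact by Tychonoff. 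Hausdorffness is immediate, as distinct characters disagree on some $g$ and are separated by $\hat g$. The embedding $G\le C(Y(G,u))$ is injective because $G$ is Archimedean with strong unit, so the intersection of its maximal ideals is $\{0\}$, whence $\hat g=0$ forces $g=0$. \textbf{The second main step is the $0$--$1$ separation property:} for disjoint closed sets I would produce a separating element by combining elements whose zero-sets carve out the two sets and truncating against $u$ through the lattice operations (e.g.\ replacing $g$ by $(0\vee g)\wedge u$). Uniqueness of the space then follows formally, the homeomorphism with any other compact Hausdorff representation being forced by the separating family of functions.

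For part (b), given $\varphi\colon(G,u)\to(H,v)$ I define $\tau\colon YH\to YG$ by $\tau(\chi)=\chi\circ\varphi$; this lands in $YG$ because $\chi\circ\varphi$ is an $\ell$-homomorphism sending $u$ to $\chi(v)=1$ (on the ideal side this is $\tau(N)=\varphi^{-1}(N)$, a maximal ideal of $G$ since $G/\varphi^{-1}(N)$ embeds in $H/N\hookrightarrow\R$). Unwinding definitions gives $\widehat{\varphi(g)}(\chi)=\chi(\varphi(g))=\hat g(\tau(\chi))$, that is, $\varphi(g)=g\circ\tau$ in $C(YH)$. Continuity of $\tau$ is then automatic, as the generators $\hat g$ of the topology on $YG$ pull back along $\tau$ to the continuous functions $\widehat{\varphi(g)}$. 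Uniqueness of $\tau$ is forced because $G$ separates the points of $YG$: any continuous $\tau'$ with $g\circ\tau'=\varphi(g)$ for all $g$ must agree with $\tau$ pointwise.

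It remains to treat ``$\varphi$ onto $\iff\tau$ one-to-one''. The forward direction is the routine one: if $\varphi$ is onto, then $N\mapsto\varphi^{-1}(N)$ is an injection of the maximal ideals of $H$ into those of $G$ (the usual correspondence with ideals containing $\ker\varphi$), so $\tau$ is one-to-one, and being a continuous injection from the compact $YH$ into the Hausdorff $YG$ it is a homeomorphic embedding, which justifies the notation ``$YG\supseteq YH$''. \textbf{The converse is the real obstacle.} When $\tau$ is one-to-one, $K=\tau(YH)$ is a closed subspace of $YG$ homeomorphic to $YH$, and under this identification $\varphi(G)$ becomes the restriction family $\{g|_{K}:g\in G\}$, a $0$--$1$ separating sub-$\ell$-group of $C(K)$ containing the unit. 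The delicate content is to upgrade this separation to the equality $\varphi(G)=H$, which requires a Tietze/Stone--Weierstrass-type extension of functions from the closed set $K$ back across $YG$; this step genuinely depends on the closure (uniform-completeness) properties of the objects rather than on separation alone, and it is the point I expect to require the most care.
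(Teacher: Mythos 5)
The paper does not prove Theorem \ref{1.2} --- it is quoted from \cite{BH89} and \cite{HR77} --- so there is no internal proof to match your outline against; I can only assess the outline itself. Your character-space construction of $Y(G,u)$ is the standard skeleton, and the compactness and Hausdorff steps are fine. The serious gap in part (a) is injectivity: you dispose of it by asserting that ``$G$ Archimedean with strong unit implies the intersection of its maximal ideals is $\{0\}$,'' but that assertion \emph{is} the content of Yosida's theorem, not a prerequisite you may quote; it is exactly where the Archimedean hypothesis does its work, via an argument of the shape ``if $\hat g=0$ on all of $Y(G,u)$ then (by a compactness/value argument) $n g\le u$ for every $n$, whence $g\le 0$.'' Without that argument you have proved nothing beyond what holds for an arbitrary $\ell$-group with strong unit, for which the representation need not be faithful. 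The $0$--$1$ separation step, by contrast, is easy to repair and you should just do it: disjoint hull-kernel closed sets are $h(I)$ and $h(J)$ with no maximal ideal containing $I+J$; since $u$ is a strong unit every proper ideal lies in a maximal one, so $I+J=G$, and writing $u=i+j$ with $i\in I^{+}$, $j\in J^{+}$, the element $(i\vee 0)\wedge u$ is $0$ on $h(I)$ and $u$ on $h(J)$.

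In part (b) the construction of $\tau$, its continuity, uniqueness, and the implication ``$\varphi$ onto $\Rightarrow\tau$ one-to-one'' are all correct. But your unease about the converse is well founded, and no Tietze or Stone--Weierstrass extension will rescue it: $\Wstar$-objects are not assumed uniformly complete, and the implication ``$\tau$ one-to-one $\Rightarrow\varphi$ onto'' fails as literally stated. Take $G$ to be the piecewise linear functions on $[0,1]$ and $\varphi$ the inclusion into $C([0,1])$ (equivalently, the Yosida embedding $(G,u)\hookrightarrow (C(YG),1)$ of any $G$ with $G\neq C(YG)$): here $YG=YH=[0,1]$ and $\tau=\id$ is injective, yet $\varphi$ is not onto. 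The implication the paper actually uses (in \ref{1.4}(b)) is the forward one you proved; the correct companion equivalence is ``$\varphi$ one-to-one $\iff\tau$ onto,'' which follows because $\ker\varphi=I(\overline{\tau(YH)})$ and $\tau(YH)$, being compact in a Hausdorff space, is closed, so dense image already means surjective. You should therefore prove the forward implication and that equivalence, and not attempt to derive surjectivity of $\varphi$ from injectivity of $\tau$.
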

We usually shall adopt the view $G \leq C(YG)$ (recall $YG = Y(G,u)$), as above. Then, for $g \in G$, $\coz(g) = \{ x \in YG \mid g(x) \neq 0 \}$, and $Z(g) = YG \setminus \coz(g)$, and $\coz(G) = \{ \coz(g) \mid g \in G \}$ (which is a base for the topology of $YG$).

If $(G,u) \xrightarrow{\varphi} (H,v)$ is a $\Wstar$-morphism, $\ker(\varphi) = \{ g \mid \varphi(g) = 0 \}$ is called a $\Wstar$-kernel, and $\wk(G)$ consists of all such $\ker(\varphi)$. The following is a synopsis from \cite[Section 2]{BH89} (an easy consequence of \ref{1.2}).

If $S$ is a closed subset of $Y(g,u)$, then a $\Wstar$-map is produced by restriction: $g \mapsto g \vert_S$, and this has kernel $I(S) = \{ g \mid Z(g) \supseteq S \}$. Observe that $S_1 \subseteq S_2$ if and only if $I(S_1) \supseteq I(S_2)$.

\begin{proposition}
$\wk(G)$ and $\{ S \mid S \mbox{ closed in } YG \}$ are complete lattices, and lattice anti-isomorphic: if $I \in \wk(G)$, then $I = I(S)$ for $S = \bigcap \{ Z(g) \mid g \in I \}$.
\end{proposition}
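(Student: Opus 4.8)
The plan is to exhibit the two displayed maps as a mutually inverse pair of order-reversing bijections between $\wk(G)$ and the poset $\cC$ of closed subsets of $YG$, each ordered by inclusion. Write $\alpha(S) = I(S) = \{g \mid Z(g) \supseteq S\}$ for $S \in \cC$, and $\beta(J) = \bigcap\{Z(g) \mid g \in J\}$ for $J \in \wk(G)$. By the synopsis preceding the statement, $\alpha(S) \in \wk(G)$, while $\beta(J) \in \cC$ as an intersection of the closed sets $Z(g)$. The displayed equivalence $S_1 \subseteq S_2 \iff I(S_1) \supseteq I(S_2)$ shows $\alpha$ reverses inclusion, and $\beta$ does so directly, since enlarging $J$ intersects more sets.

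First I would check that $\beta\alpha = \mathrm{id}_{\cC}$. The inclusion $S \subseteq \beta(\alpha(S))$ is immediate, as every $Z(g)$ with $g \in I(S)$ contains $S$ by definition. For the reverse, fix $x \in YG \setminus S$. Since $YG$ is compact Hausdorff and $G$ is $0$-$1$ separating closed sets (Theorem \ref{1.2}(a)), applying this to the disjoint closed sets $S$ and $\{x\}$ yields $g \in G$ with $g|_S = 0$ and $g(x) = 1$; then $Z(g) \supseteq S$, so $g \in I(S)$, while $x \notin Z(g)$, so $x \notin \beta(\alpha(S))$. Hence $\beta(\alpha(S)) = S$, and in particular $\alpha$ is injective.

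The crux is surjectivity of $\alpha$: every $J \in \wk(G)$ must have the form $I(S)$ for some closed $S$, and this is where the Yosida functoriality does the work. Given $J = \ker(\varphi)$ for a $\Wstar$-morphism $\varphi\colon (G,u) \to (H,v)$, factor $\varphi$ through its image $H' = \varphi(G)$; then $J = \ker(G \to H')$ and $G \to H'$ is onto, so by Theorem \ref{1.2}(b) the induced $\tau\colon YH' \to YG$ is one-to-one and continuous. Because $YH'$ is compact and $YG$ Hausdorff, $S := \tau(YH')$ is a closed subset of $YG$ and $\tau$ a homeomorphism onto it. Unwinding $\varphi(g) = g \circ \tau$ gives $g \in J \iff g \circ \tau = 0 \iff g|_S = 0 \iff Z(g) \supseteq S \iff g \in I(S)$, so $J = \alpha(S)$.

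Finally I would assemble the pieces. Surjectivity of $\alpha$ together with $\beta\alpha = \mathrm{id}$ forces $\alpha$ to be a bijection with $\alpha^{-1} = \beta$; reading this off for $J = \alpha(S)$ gives $\beta(J) = S$, so $J = \alpha(\beta(J)) = I(\bigcap\{Z(g)\mid g \in J\})$, which is exactly the asserted formula. Both maps reverse inclusion, so they form a lattice anti-isomorphism; and since $\cC$ is a complete lattice (arbitrary intersections of closed sets are closed, with top $YG$), the anti-isomorphism transports this structure to $\wk(G)$, interchanging meets and joins, so $\wk(G)$ is a complete lattice too. The one genuine obstacle is the surjectivity step: one must pass to the image and invoke compactness to guarantee that the Yosida dual of an onto $\Wstar$-morphism is a closed topological embedding, so that the set $S$ produced is actually closed.
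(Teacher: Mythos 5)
Your proof is correct and follows precisely the route the paper intends: the paper gives no written proof, presenting the proposition as ``an easy consequence of Theorem \ref{1.2}'' via the restriction maps $g \mapsto g|_S$, and your argument fills in exactly those details --- the $0$-$1$ separation property of Theorem \ref{1.2}(a) to get $\bigcap\{Z(g) \mid g \in I(S)\} = S$, and the duality of Theorem \ref{1.2}(b) (onto morphisms correspond to closed embeddings of Yosida spaces) to show every $\Wstar$-kernel is of the form $I(S)$. You correctly identify the one point needing care, namely factoring $\varphi$ through its image and using compactness so that $\tau(YH')$ is closed.
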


Armed with the 	``Yosida apparatus" just described, we discuss various types of ideals.

\begin{facts}\label{1.4}
Suppose $(G,u) \in \Wstar$, viewed as $G \leq C(Y(G,u))$.
\begin{itemize}
\item[(a)] The proper maximal ideals are, for $p \in Y(G,u)$, the $M_p \equiv I(\{ p \}) = \{ p \in G \mid g(p) = 0 \}$.

\item[(b)] The intersections of maximal ideals are exactly the $I(S)$ for $S$ closed in $Y(G,u)$, i.e., exactly the $\Wstar$-kernels (since $G \twoheadrightarrow G/I(S)$ is (isomorphic to) the map of restricting $G \twoheadrightarrow G \vert_S$).

\item[(c)] If $G(g) = I(S)$ ($S$ closed), then $S = Z(g)$.

\item[(d)] $g^{\pperp} = I(\cl \Int Z(g))$ ($\cl \Int$ is the topological closure of interior).

\item[(e)] (A ``regular open" (RO) subset of a topological space is a set $S = \Int \cl S$.)

For $g \in G$, $\coz(g)$ is RO if and only if $g^{\pperp} = I(Z(g))$.

\item[(f)] (A ``weak unit" in (any) $G$ is an element $v \geq 0$ for which $v^{\perp} =\{ 0 \}$ (or $v^{\pperp} = G$.)

$v$ is a weak (resp., strong) unit in $(G,u) \in \Wstar$ if and only if $\coz(v)$ is dense in $Y(G,u)$ (resp., $\coz(v) = Y(G,u)$).

\item[(g)] For all $g \in G$, $G(g) \subseteq I(Z(g)) \subseteq I(\cl \Int Z(g)) = g^{\pperp}$.
\end{itemize} 
\end{facts}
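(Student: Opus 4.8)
The plan is to transport every assertion through the Yosida representation $G \leq C(YG)$ of Theorem~\ref{1.2} and read it off from the topology of $YG$. I would keep two dictionary items at hand throughout: since the order on $G$ is the pointwise order inherited from $C(YG)$, an inequality $|f|\leq n|g|$ holds in $G$ iff it holds pointwise on $YG$; and since $\coz(G)$ is a base for $YG$ with $G$ separating closed sets, every nonempty open set contains a nonempty cozero-set and every open set is the union of the cozero-sets it contains.

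Items (a)--(c) are then identifications. For (a): evaluation at $p\in YG$ is a $\Wstar$-morphism $G\to\R$ with kernel $I(\{p\})$, whose quotient embeds in $\R$, so $I(\{p\})$ is a proper maximal ideal (H\"{o}lder, and $u(p)=1$); conversely a proper maximal ideal cannot contain the strong unit $u$, so it lies in $Y(G,u)$ and is the $M_p$ of its point. For (b): straight from the definition $I(S)=\bigcap_{p\in S}M_p$, so the $I(S)$ are precisely the intersections of maximal ideals, and they are precisely the $\Wstar$-kernels by the Proposition. For (c): if $G(g)=I(S)$ then $g\in I(S)$ gives $S\subseteq Z(g)$, while each $h\in G(g)$ satisfies $|h|\leq n|g|$ and hence $Z(g)\subseteq Z(h)$; since the Proposition gives $S=\bigcap\{Z(h):h\in I(S)\}$, we also get $Z(g)\subseteq S$, so $S=Z(g)$.

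The substance is (d), $g^{\pperp}=I(\cl\Int Z(g))$, and I expect this to be the main obstacle. First I would record the pointwise meaning of orthogonality: $f\perp g$ (that is, $|f|\wedge|g|=0$) iff $\coz(f)\cap\coz(g)=\emptyset$, equivalently, as $\coz(f)$ is open and sits inside $Z(g)$, iff $\coz(f)\subseteq\Int Z(g)$. Because cozero-sets form a base, the union of all such $\coz(f)$ is exactly $\Int Z(g)$; hence $h\in g^{\pperp}$, i.e.\ $h\perp f$ for every $f\in g^{\perp}$, holds iff $\coz(h)\cap\Int Z(g)=\emptyset$. Finally, an open set disjoint from a set is disjoint from its closure, so with $\coz(h)$ open this is the same as $\coz(h)\cap\cl\Int Z(g)=\emptyset$, i.e.\ $Z(h)\supseteq\cl\Int Z(g)$, which says exactly $h\in I(\cl\Int Z(g))$.

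With (d) established, the rest is point-set bookkeeping. For (g): the inclusion $G(g)\subseteq I(Z(g))$ is the pointwise estimate already used in (c), and $I(Z(g))\subseteq I(\cl\Int Z(g))$ follows from anti-monotonicity of $S\mapsto I(S)$ together with $\cl\Int Z(g)\subseteq\cl Z(g)=Z(g)$ (the latter because $g$ is continuous, so $Z(g)$ is closed), the final equality being (d). For (e): by the anti-isomorphism of the Proposition, $g^{\pperp}=I(Z(g))$ holds iff $\cl\Int Z(g)=Z(g)$, and taking complements this reads $\Int\cl\coz(g)=\coz(g)$, i.e.\ $\coz(g)$ is RO. For (f): $v^{\perp}=\{0\}$ means no nonzero cozero-set lies in $\Int Z(v)$, which by the base property forces $\Int Z(v)=\emptyset$, i.e.\ $\coz(v)$ dense; and $v$ is a strong unit iff $u\leq nv$ for some $n$, which, $v$ being continuous and $YG$ compact, is equivalent to $v$ being bounded away from $0$, i.e.\ $\coz(v)=YG$.
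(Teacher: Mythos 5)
Your proposal is correct and follows essentially the same route as the paper: everything is read off from the Yosida representation and the anti-isomorphism between $\Wstar$-kernels and closed subsets of $YG$. The only real difference is in (d), where the paper simply cites \cite{HM96} for the characterization $g^{\pperp}=\{f\mid \coz(f)\subseteq\overline{\coz(g)}\}$ while you derive it from scratch via the base property of $\coz(G)$ (a welcome self-contained argument); do note in (b) that an arbitrary set $S$ of points can be replaced by its closure since $I(S)=I(\overline{S})$, which is needed to get ``$S$ closed'' as stated.
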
 

\begin{proof}
(indications thereof)

(a) Clear.

(b) From (a), and $I(S) = I(\overline{S})$.

(c) Suppose $G(g) = I(S)$. Then $g \in I(S)$, so $Z(g) \supseteq S$. If $x \in Z(g)$, then $x \in Z(f)$ for every $f \in G(g)=I(S)$, and it follows that $x \in S$. Hence $Z(g) \subseteq S$.

(d) It is noted in \cite{HM96} that $g^{\pperp} = \{ f \in G \mid \coz(f) \subseteq \overline{\coz(g)} \}$. Then, a little topology (see \cite{E89}) gives (d).

(e) From (c) and (d).

(f) First: In general, $v^{\perp} = \{ f \in G \mid \coz(f) \cap \coz(v) = \emptyset \}$ and the statment about weak units follows. Second: It's easy that $\coz(v) = Y(G,u)$ if and only if $G = G(v)$.

(g) Follows from the preceding.
\end{proof}

\begin{definition}
In $\Wstar$, the class $\CR = \{ (G,u) \mid \forall g \in G, \coz(g) \mbox{ is RO} \}$.
\end{definition}

\begin{corollary}\label{1.6}
In $\Wstar$,
\begin{itemize}
\item[(a)] $G \in \Y$ if and only if $\forall g \in G$, $G(g) = I(Z(g))$.
\item[(b)] $G \in \CR$ if and only if $\forall g \in G$, $I(Z(g)) = g^{\pperp}$.
\item[(c)] $\M = \Y \cap \CR$.
\end{itemize}
\end{corollary}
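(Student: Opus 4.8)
The plan is to derive all three parts directly from the translation dictionary assembled in Facts \ref{1.4}, which converts each algebraic statement about ideals into a topological statement about the closed sets $Z(g)$, $\cl\Int Z(g)$, and $\coz(g)$. The key recurring tool is the anti-isomorphism $S \mapsto I(S)$ between closed subsets of $YG$ and $\Wstar$-kernels (the Proposition), together with the chain in \ref{1.4}(g): $G(g) \subseteq I(Z(g)) \subseteq I(\cl\Int Z(g)) = g^{\pperp}$. Since everything in sight is squeezed between $G(g)$ and $g^{\pperp}$, each coincidence of ideal types will reduce to the collapse of one of the two inclusions in this chain.

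For part (a), I would argue that $G \in \Y$ means every $G(g)$ is an intersection of maximal ideals, which by \ref{1.4}(b) is exactly the condition that every $G(g)$ is of the form $I(S)$ for some closed $S$. Now if $G(g) = I(S)$, then \ref{1.4}(c) forces $S = Z(g)$, so $G(g) = I(Z(g))$; conversely, if $G(g) = I(Z(g))$ then $G(g)$ is visibly an $I(S)$ and hence an intersection of maximal ideals. Thus $G \in \Y$ iff $G(g) = I(Z(g))$ for all $g$, which is the left-hand inclusion of \ref{1.4}(g) becoming equality. For part (b), I would simply unwind the definition: $G \in \CR$ means each $\coz(g)$ is RO, and \ref{1.4}(e) states precisely that $\coz(g)$ is RO iff $g^{\pperp} = I(Z(g))$. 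This is the right-hand inclusion of \ref{1.4}(g) becoming equality (using $g^{\pperp} = I(\cl\Int Z(g))$ from \ref{1.4}(d)).

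For part (c), the identity $\M = \Y \cap \CR$ should follow by combining (a) and (b) through the chain in \ref{1.4}(g). By definition $G \in \M$ iff $G(g) = g^{\pperp}$ for every $g$. Given the inclusions $G(g) \subseteq I(Z(g)) \subseteq g^{\pperp}$, we have $G(g) = g^{\pperp}$ if and only if both inclusions are equalities simultaneously, i.e. $G(g) = I(Z(g))$ and $I(Z(g)) = g^{\pperp}$. By (a) the first holds for all $g$ exactly when $G \in \Y$, and by (b) the second holds for all $g$ exactly when $G \in \CR$. The only point requiring a word of care is the quantifier bookkeeping: membership in each class is a statement quantified over all $g \in G$, so I would note that equality of the two outer terms $G(g) = g^{\pperp}$ for a fixed $g$ is equivalent to the conjunction of the two intermediate equalities for that same $g$, and then take the intersection over all $g$ to conclude $\M = \Y \cap \CR$.

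I do not expect any genuine obstacle here; the content is entirely front-loaded into Facts \ref{1.4}, and the corollary is a formal consequence. The one place to stay honest is part (a), where the equivalence between ``intersection of maximal ideals'' and ``equals some $I(S)$'' rests on \ref{1.4}(b), and the pinning down of $S$ as $Z(g)$ rests on \ref{1.4}(c); I would make sure to invoke both rather than leaving the identification of $S$ implicit.
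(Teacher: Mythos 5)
Your proof is correct and follows the paper's approach exactly: the paper's own proof is simply ``All immediate from the various items in \ref{1.4},'' and you have filled in precisely the intended details --- (b) and (c) of Facts \ref{1.4} for part (a), (d) and (e) for part (b), and the squeeze via the chain in (g) for part (c). The quantifier bookkeeping you flag at the end is handled correctly.
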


\begin{proof}
All immediate from the various items in \ref{1.4}.
\end{proof}


\section{The class $\Y$ in $\Wstar$}

We remind that (in general) $G \in \Y$ means that each $G(g)$ is an intersection of maximal ideals.

\begin{theorem}\label{2.1}
For $(G,u) \in \Wstar$. the following are equivalent:
\begin{itemize}
\item[(1)] $G \in \Y$
\item[(2)] $\forall g \in G$, $G(g) = I(Z(g))$.
\item[(3)] $\forall g \in G$, $G(g)$ is a $\Wstar$-kernel.
\item[(4)] $\forall g \in G$, $G/G(g)$ is Archimedean.
\item[(5)] The Yosida embedding $G \leq C(Y(G,u))$ has the property: $\coz(f) \subseteq \coz(g)\Rightarrow f \in G(g)$. 
\end{itemize}
\end{theorem}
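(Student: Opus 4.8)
The plan is to treat condition~(2) as the hub and link every other condition to it, since with Facts~\ref{1.4} and Corollary~\ref{1.6} in hand each spoke is short. First, (1)$\Leftrightarrow$(2) is nothing but Corollary~\ref{1.6}(a), so that edge requires no new work; the real content lies in attaching (3), (4), and (5).

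For (2)$\Leftrightarrow$(5) I would unwind the definition $I(S) = \{f \mid Z(f) \supseteq S\}$ at $S = Z(g)$ and pass to complements to record the identity
\[
I(Z(g)) = \{\, f \in G \mid \coz(f) \subseteq \coz(g)\,\}.
\]
Because Facts~\ref{1.4}(g) already gives $G(g) \subseteq I(Z(g))$ for every $g$, the equality $G(g) = I(Z(g))$ of (2) is equivalent to the reverse inclusion $I(Z(g)) \subseteq G(g)$, which, read through the displayed description, says precisely that $\coz(f) \subseteq \coz(g)$ forces $f \in G(g)$ — that is (5). For (2)$\Leftrightarrow$(3) I would invoke Facts~\ref{1.4}(b), identifying the $\Wstar$-kernels with the sets $I(S)$, $S$ closed: if (2) holds then $G(g) = I(Z(g))$ is a $\Wstar$-kernel since $Z(g)$ is closed, giving (3); conversely, if $G(g)$ is a $\Wstar$-kernel then $G(g) = I(S)$ for some closed $S$, and Facts~\ref{1.4}(c) pins down $S = Z(g)$, returning (2).

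The substantive edge is (3)$\Leftrightarrow$(4). The direction (3)$\Rightarrow$(4) is routine: a $\Wstar$-kernel is $\ker\varphi$ for a $\Wstar$-morphism $\varphi\colon (G,u) \to (H,v)$, so $G/G(g) \cong \varphi(G) \leq H$, and since $H$ is Archimedean and $\ell$-subgroups of Archimedean $\ell$-groups are Archimedean, $G/G(g)$ is Archimedean. For (4)$\Rightarrow$(3) I would argue that the canonical surjection $G \twoheadrightarrow G/G(g)$ carries the strong unit $u$ to $\bar u = u + G(g)$, and that the surjective image of a strong unit is again a strong unit (from $|f| \leq n u$ one gets $|\varphi(f)| \leq n\bar u$ upon applying the homomorphism). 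Hence, assuming (4), the pair $(G/G(g), \bar u)$ is Archimedean with strong unit, i.e.\ an object of $\Wstar$, so the quotient map is a genuine $\Wstar$-morphism with kernel $G(g)$, giving (3).

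The main (if modest) obstacle I expect is precisely this last step: the bookkeeping that promotes the quotient map to a $\Wstar$-morphism. One must check that $\bar u$ really is a strong unit and handle the degenerate case $u \in G(g)$ (forcing $G(g) = G$ and a trivial quotient), and then confirm that ``Archimedean quotient with preserved strong unit'' is exactly what makes $G(g)$ a $\Wstar$-kernel in the sense defined after Theorem~\ref{1.2}. Once that identification is secured, the four spokes (1)$\Leftrightarrow$(2), (2)$\Leftrightarrow$(3), (2)$\Leftrightarrow$(5), and (3)$\Leftrightarrow$(4) together yield the full equivalence.
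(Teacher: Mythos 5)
Your proposal is correct and follows essentially the same route as the paper: the equivalence (1)$\Leftrightarrow$(2) via Corollary \ref{1.6}(a), the identification of $\Wstar$-kernels with the $I(S)$ (Facts \ref{1.4}(b),(c)) to link (2) and (3), the observation that the quotient map preserves the strong unit for (4)$\Rightarrow$(3), and the unwinding of $I(Z(g))$ as $\{f \mid \coz(f)\subseteq\coz(g)\}$ for (5). Your hub-and-spoke organization around (2), and your explicit attention to the degenerate case $u\in G(g)$, are merely tidier presentations of the same argument.
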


\begin{proof}
$(1) \Rightarrow (2)$ is \ref{1.6}(a).

$(2) \Rightarrow (3)$ from \ref{1.4}(c).

$(3) \Rightarrow (4)$. Obvious.

$(4) \Rightarrow (3)$. If $G/G(g)$ is Archimedean, the quotient map $G \twoheadrightarrow G/G(g)$  is a $\Wstar$-map (since any quotient preserves strong unit), so $G(g)$ is a $\Wstar$-kernel.

$(3) \Rightarrow (1)$. Any $\Wstar$-kernel is a intersection of maximal ideals (\ref{1.4}(b)). (3) says $Z(f) \supseteq Z(g) \Rightarrow f \in G(g)$, which is what (2) says. 
\end{proof}

\begin{remark}
Assuming nothing about $G$, \cite{MZ06} says $G \in \Y$ if and only if there is an embedding $G \hookrightarrow \R^X$ (for some $X$) with the property \ref{2.1}(5) (which specifies the embedding).
\end{remark}


\section{The class $\CR$ in $\Wstar$}

We remind the reader that $G \in \CR$ was defined in the Abstract as: Each $g^{\pperp}$ is a $\Wstar$-kernel, and this was shown in \ref{1.4} to be equivalent to:  Each $\coz(g)$ is RO -- regular open (hence ``CR", for ``cozero-regular"). 

We shall create a compendium of conditions equivalent  to ``$G \in \CR$", at least eight. To avoid the mind-numbing effect of a single list of these, we first make two sub-groups of these, of rather similar conditions, as the following two lemmas.

\begin{lemma}\label{3.1}
(Regularity conditions) For $G \in \Wstar$, the following are equivalent.
\begin{itemize}
\item[(R1)] $G \in \CR$.
\item[(R2)] $\forall f,g \in G$, if $\coz(f) \subseteq \overline{\coz(g)}$, then $\coz(f) \subseteq \coz(g)$.
\item[(R3)] $\forall g \in G$, $I(Z(g)) = I(\cl \Int Z(g))$ (i.e., $Z(g) = \cl \Int Z(g)$).
\item[(R4)] $\forall g \in G$, $I(Z(g)) = g^{\pperp}$. 
\end{itemize}
\end{lemma}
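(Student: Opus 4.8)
The plan is to prove Lemma \ref{3.1} by establishing the cycle of equivalences (R1) $\Leftrightarrow$ (R4) $\Leftrightarrow$ (R3), and then handling (R2) separately, exploiting the dictionary between topological conditions on $Z(g)$ and $\coz(g)$ and ideal-theoretic statements already assembled in Facts \ref{1.4}. The crucial observation is that almost every implication here is a \emph{translation} rather than a genuine computation: Facts \ref{1.4}(d) gives $g^{\pperp} = I(\cl\Int Z(g))$ unconditionally, and Facts \ref{1.4}(e) already records that $\coz(g)$ is RO iff $g^{\pperp} = I(Z(g))$. So the first move is simply to invoke the definition of $\CR$ together with \ref{1.4}(e): (R1) and (R4) are literally the same statement once one expands ``$\coz(g)$ is RO'' via \ref{1.4}(e). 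This gives (R1) $\Leftrightarrow$ (R4) with essentially no work.

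Next I would connect (R4) and (R3). Substituting $g^{\pperp} = I(\cl\Int Z(g))$ from \ref{1.4}(d) into (R4) turns the equation $I(Z(g)) = g^{\pperp}$ into $I(Z(g)) = I(\cl\Int Z(g))$, which is precisely (R3). The parenthetical claim in (R3), that this ideal equation is equivalent to the set equation $Z(g) = \cl\Int Z(g)$, follows from the anti-isomorphism between $\wk(G)$ and closed subsets of $YG$ (the Proposition after Theorem \ref{1.2}): since $I(S_1) = I(S_2) \Leftrightarrow \overline{S_1} = \overline{S_2}$, and both $Z(g)$ and $\cl\Int Z(g)$ are already closed, $I(Z(g)) = I(\cl\Int Z(g))$ forces $Z(g) = \cl\Int Z(g)$. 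One should note here the purely topological fact that $Z(g) = \cl\Int Z(g)$ is equivalent to $\coz(g)$ being regular open, since $\coz(g) = \Int\cl\coz(g)$ and $Z(g) = \cl\Int Z(g)$ are complementary statements under the correspondence $S \mapsto YG \setminus S$; this confirms internal consistency with (R1).

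The step requiring the most care is (R2), since it quantifies over \emph{pairs} $f,g$ rather than a single $g$, so it is not merely a restatement of a single-variable condition. For (R1)$\Rightarrow$(R2), suppose $\coz(f) \subseteq \overline{\coz(g)}$; using the description from \ref{1.4}(d) that $f \in g^{\pperp}$ iff $\coz(f) \subseteq \overline{\coz(g)}$, the hypothesis says $f \in g^{\pperp}$, and since $G \in \CR$ gives $g^{\pperp} = I(Z(g))$ by (R4), we get $f \in I(Z(g))$, i.e. $Z(f) \supseteq Z(g)$, equivalently $\coz(f) \subseteq \coz(g)$, which is (R2). For the converse (R2)$\Rightarrow$(R1), I would aim to show $\coz(g)$ is regular open directly: it suffices to prove $\Int\cl\coz(g) \subseteq \coz(g)$ (the reverse inclusion is automatic), and the natural route is to take a basic cozero neighborhood $\coz(f) \subseteq \cl\coz(g) = \overline{\coz(g)}$ inside $\Int\cl\coz(g)$ and apply (R2) to conclude $\coz(f) \subseteq \coz(g)$, then sweep up all such $f$ using that $\coz(G)$ is a base for $YG$.

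The main obstacle I anticipate is precisely this last implication (R2)$\Rightarrow$(R1): the other arrows are dictionary substitutions, but here one must genuinely pass from the pointwise-on-basic-cozeros information supplied by (R2) to the set-level regularity $\Int\cl\coz(g) \subseteq \coz(g)$, and this uses that $\coz(G)$ forms a base (so every point of the open set $\Int\cl\coz(g)$ sits in some $\coz(f)$ contained in $\cl\coz(g)$) together with 0-1 separation from Theorem \ref{1.2}(a) to guarantee enough separating functions $f$. I would organize the final argument as the cycle (R1)$\Rightarrow$(R2)$\Rightarrow$(R1) closing the loop with the already-established (R1)$\Leftrightarrow$(R4)$\Leftrightarrow$(R3), so that the lone topological lemma about bases and regular open sets carries the only real content.
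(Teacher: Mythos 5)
Your proof is correct and follows essentially the same route as the paper: the (R1)$\Leftrightarrow$(R4)$\Leftrightarrow$(R3) arrows are the same substitutions from Facts \ref{1.4}(d),(e) together with the closed-set/ideal anti-isomorphism, and your (R2)$\Leftrightarrow$(R1) argument via the fact that $\coz(G)$ is a base is exactly what the paper compresses into the single clause ``(R2) says that $\coz(g)$ is RO.'' You have merely made explicit the one genuinely topological step that the paper leaves implicit.
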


\begin{proof}
(R2) says that $\coz(g)$ is RO, i.e., (R1), and (R3) says $Z(g)$ is regular closed, which is equivalent to $\coz(g) = YG \setminus Z(g)$ is RO. From \ref{1.4}, $g^{\pperp} = I(\cl \Int Z(g))$, so (R4) if and only if (R3). 
\end{proof}

Now, in any $G$, an ideal $I$ is called a $d$-ideal if $g \in I$ implies $g^{\pperp} \subseteq I$. (For further comment on this, see \ref{3.4} below.) Observe that (i) any $g^{\pperp}$ is a $d$-ideal, and (ii) any intersection of $d$-ideals is a $d$-ideal.

\begin{lemma}\label{3.2}
($d$-ideal conditions) For $G \in \Wstar$, the following are equivalent:
\begin{itemize}
\item[(d1)] Each $\Wstar$-kernel of $G$ is a $d$-ideal.
\item[(d2)] $\forall g \in G$, $I(Z(g))$ is a $d$-ideal. 
\item[(d3)] $\forall p \in YG$, $M_p \equiv \{ g \in G \mid g(p) = 0 \}$ is a $d$-ideal. \\
(In our general notation, $M_p = I(\{ p \})$.)
\end{itemize}
\end{lemma}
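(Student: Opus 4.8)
The plan is to run everything through the Yosida dictionary of \ref{1.4}. Two structural facts carry the argument: every $\Wstar$-kernel has the form $I(S)$ for a closed $S \subseteq YG$ (\ref{1.4}(b)), and for such $S$ one has $I(S) = \bigcap\{M_p \mid p \in S\}$, since $g \in I(S)$ means exactly that $g$ vanishes on all of $S$. I also use the two observations stated just before the lemma: each principal polar $g^{\pperp}$ is a $d$-ideal, and an arbitrary intersection of $d$-ideals is again a $d$-ideal. Because $Z(g)$ and each singleton $\{p\}$ are closed, both $I(Z(g))$ and $M_p = I(\{p\})$ are $\Wstar$-kernels; this immediately gives $(d1) \Rightarrow (d2)$ and $(d1) \Rightarrow (d3)$.

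For $(d3) \Rightarrow (d1)$ I would take an arbitrary $\Wstar$-kernel, write it as $I(S) = \bigcap\{M_p \mid p \in S\}$, and observe that under $(d3)$ this displays it as an intersection of $d$-ideals, hence a $d$-ideal by observation (ii); the degenerate case $S = \emptyset$ gives $I(S) = G$, which is trivially a $d$-ideal. This closes the loop $(d1) \Leftrightarrow (d3)$.

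The one step needing an idea is $(d2) \Rightarrow (d1)$, and the key is to resist trying to realize a general kernel as an intersection of the ideals $I(Z(g))$, which need not be possible (a general $M_p$ is typically not such an intersection). Instead I would argue on generators: given a $\Wstar$-kernel $I(S)$ and $g \in I(S)$, the inclusion $S \subseteq Z(g)$ and the anti-isomorphism yield $I(Z(g)) \subseteq I(S)$. Since trivially $g \in I(Z(g))$, condition $(d2)$ applied to the kernel $I(Z(g))$ gives $g^{\pperp} \subseteq I(Z(g)) \subseteq I(S)$. As $g \in I(S)$ was arbitrary, $I(S)$ is a $d$-ideal, and as $I(S)$ was an arbitrary $\Wstar$-kernel, $(d1)$ follows.

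Thus the sole obstacle is the $(d2) \Rightarrow (d1)$ direction, and it dissolves once one inserts $I(Z(g))$ as an intermediate ideal between $g^{\pperp}$ and $I(S)$, exploiting $S \subseteq Z(g)$ rather than attempting to reconstruct $I(S)$ from zero-set kernels. Everything else is a direct application of the anti-isomorphism and the two observations, so I would present the proof as: the two easy implications out of $(d1)$, the intersection argument for $(d3) \Rightarrow (d1)$, and the sandwiching argument for $(d2) \Rightarrow (d1)$, which together establish the equivalence of all three conditions.
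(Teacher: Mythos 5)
Your proof is correct and uses essentially the same ideas as the paper: the paper closes the cycle via $(d2) \Rightarrow (d3)$ using exactly your sandwich $g^{\pperp} \subseteq I(Z(g)) \subseteq M_p$ (the special case $S = \{p\}$ of your argument), and then $(d3) \Rightarrow (d1)$ by the intersection observation, just as you do. Your direct $(d2) \Rightarrow (d1)$ is a harmless reorganization of the same argument, so there is nothing to add.
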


\begin{proof}
The $\Wstar$-kernels are the $I(S)$, $S$ closed in $YG$.

(d3) $\Rightarrow$ (d1) by (ii) above, and (d1) $\Rightarrow$ (d2) obviously.

Suppose (d2) and $g \in M_p$. Then $p \in Z(g)$, so $M_p \supseteq I(Z(g))$. Thus $g^{\pperp} \subseteq I(Z(g)) \subseteq M_p$ and (d3) holds.
\end{proof}

(d1) of \ref{3.2} is a very curious property.

\begin{theorem}\label{3.3}
For $G \in \Wstar$, the following are equivalent.
\begin{itemize}
\item[(1)] $G \in \CR$ (any/all of the conditions in \ref{3.1} hold).
\item[(2)] For each $g \in G$, $I(Z(g))$ is a $d$-ideal (any/all of the conditions in \ref{3.2} hold).
\item[(3)] In $G$, each weak unit is strong (see \ref{1.4}(f)). 
\end{itemize}
\end{theorem}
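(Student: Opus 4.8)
The plan is to establish $(1)\Leftrightarrow(2)$ and $(1)\Leftrightarrow(3)$, using the reformulations collected in \ref{3.1} and \ref{3.2} together with the topological dictionary of \ref{1.4}; essentially all of the work sits in a single implication. For $(1)\Leftrightarrow(2)$ I would read (1) as condition (R4), namely $I(Z(g))=g^{\pperp}$ for every $g$. Since every principal polar is a $d$-ideal, (R4) gives at once that each $I(Z(g))$ is a $d$-ideal, which is (2) in the form (d2). Conversely, \ref{1.4}(g) always supplies $I(Z(g))\subseteq g^{\pperp}$, and $g\in I(Z(g))$ trivially; so if $I(Z(g))$ is a $d$-ideal, then $g\in I(Z(g))$ forces $g^{\pperp}\subseteq I(Z(g))$, hence $I(Z(g))=g^{\pperp}$, i.e. (R4). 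Thus $(1)\Leftrightarrow(2)$ is pure bookkeeping. The direction $(1)\Rightarrow(3)$ is equally quick: if $v\ge 0$ is a weak unit then $\coz(v)$ is dense in $YG$ by \ref{1.4}(f), so $\overline{\coz(v)}=YG$; condition (1) makes $\coz(v)$ regular open, whence $\coz(v)=\Int\overline{\coz(v)}=\Int YG=YG$, and $v$ is a strong unit by \ref{1.4}(f).

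The substance is $(3)\Rightarrow(1)$, which I would argue contrapositively: from a $g$ with $\coz(g)$ not regular open I will manufacture a weak unit that is not strong. Since $\coz(g)\subseteq\Int\overline{\coz(g)}$ always, failure of RO yields a point $p\in\Int\overline{\coz(g)}\setminus\coz(g)$. Put $N:=\Int\overline{\coz(g)}$, an open neighborhood of $p$; because $YG\setminus\Int Z(g)=\overline{\coz(g)}\supseteq N$, we have $N\cap\Int Z(g)=\emptyset$. Using that $YG$ is compact Hausdorff, hence regular, choose a closed neighborhood $A$ of $p$ with $A\subseteq N$; by the $0$–$1$ separation of closed sets built into the Yosida embedding (\ref{1.2}(a)) there is $\phi\in G$, $0\le\phi\le u$, vanishing on $A$ and equal to $u$ off $N$, so that $p\in\Int Z(\phi)\subseteq Z(\phi)\subseteq N$. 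Now set $h:=|g|\vee\phi\in G$. Then $Z(h)=Z(g)\cap Z(\phi)$ contains $p$, so $\coz(h)\neq YG$ and $h$ is not a strong unit; and any open set inside $Z(h)\subseteq Z(g)\cap N$ would lie in $\Int Z(g)\cap N=\emptyset$, so $\Int Z(h)=\emptyset$, i.e. $\coz(h)$ is dense and $h$ is a weak unit by \ref{1.4}(f). This weak-but-not-strong $h$ contradicts (3), completing $(3)\Rightarrow(1)$.

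The step I expect to be the main obstacle is exactly the construction of $h$, and in particular resisting the tempting-but-doomed idea of ``filling in'' $\Int Z(g)$ with a single cozero-set so as to enlarge $\coz(g)$ to a dense set omitting $p$: a single element of a proper sub-$\ell$-group $G\le C(YG)$ need not have cozero dense in a prescribed open set, so one cannot in general produce a weak unit of the band $g^{\perp}$. The correct move is to localize near $p$ rather than fill: intersecting $Z(g)$ with a small zero-set neighborhood of $p$ (a region on which $Z(g)$ has empty interior) yields a \emph{nonempty} zero-set with empty interior, and its defining element is the desired weak non-unit. The one thing that must be checked with care is that $G$ itself—not merely $C(YG)$—furnishes the neighborhood $\phi$, and this is precisely what the Yosida data in \ref{1.2}(a) guarantee (the cozero-sets form a base, and $G$ is $0$–$1$ separating on closed sets).
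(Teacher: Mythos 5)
Your proposal is correct and follows essentially the same route as the paper: $(1)\Leftrightarrow(2)$ via (R4)/(d2) with the observation that polars are $d$-ideals, $(1)\Rightarrow(3)$ by reading density of $\coz(v)$ against regularity, and $(3)\Rightarrow(1)$ contrapositively by using the $0$--$1$ separation of the Yosida embedding to build a weak unit that is not strong. The only (cosmetic) difference is in the last step: the paper starts from a failure of (R2) with a pair $f,g$ and forms $k=h+g$, whereas you work directly with a single $g$ whose cozero-set is not regular open and form $|g|\vee\phi$; both hinge on the same separation property and the same density argument.
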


\begin{proof}
$(1) \Leftrightarrow (2)$, in the form \ref{3.1}(R4) $\Leftrightarrow$ \ref{3.2}(d2): ($\Rightarrow$) $g^{\pperp}$ is always a $d$-ideal; ($\Leftarrow$) If $I(Z(g))$ is a $d$-ideal, then $g^{\pperp} \subseteq I(Z(g))$, thus $g^{\pperp} = I(Z(g))$ from \ref{1.4}.

$(1) \Rightarrow (3)$, in the form \ref{3.1}(R4) $\Rightarrow$ (3). Suppose $v$ is a weak unit, so $v^{\pperp} = G$. By (R4), $v^{\pperp} = I(Z(v))$, so $Z(v) = \emptyset$. Thus, $v$ is a strong unit (\ref{1.4}(f)).

$(3) \Rightarrow (1)$, in the form (3) $\Rightarrow$ \ref{3.1}(R2). Suppose (R2) fails, with $f,g > 0$: $\coz(f) \subseteq \overline{\coz(g)}$ but $\coz(f) \nsubseteq \coz(g)$. Choose $p \in \coz(f) \setminus \coz(g)$. Then, there is $h > 0$ with $h(p) = 0$ and $Z(f) \subseteq \coz(h)$ (since $G$ separates closed sets in $YG$). Let $k = h + g$. We have $k(p) = h(p) + g(p) = 0+0 = 0$, so $\coz(k) = \coz(h) \cup \coz(g) \neq YG$, and $k$ is not a strong unit. But $k$ is a weak unit, because $\coz(k)$ is dense in $YG$: if $\emptyset \neq U$ is open, and if $U \cap \coz(h) = \emptyset$, then $U \subseteq Z(h) \subseteq \coz(f)$, so $U \subseteq \overline{\coz(g)}$ and therefore $U \cap \coz(g) \neq \emptyset$.
\end{proof}

\begin{remark}\label{3.4}
For $(G,u) \in \bf{W}$ -- meaning $u$ is merely a weak unit -- we again have the Yosida space $Y(G,u)=YG$, and the Yosida representation now of 
\[
G \hookrightarrow D(YG) = \{ f \in C(YG,[-\infty,+\infty]) \mid f^{-1}(-\infty, +\infty) \mbox{ dense} \},
\]
so again the family $\coz(G)$, and $(G,u) \in \CR$ can be defined. (One sees that $(G,u) \in \CR$ if and only if $(G(u), u) \in \CR$ in $\Wstar$.)  This ``$\CR$ in $\bf{W}$"  is addressed in \cite{BM18}, focusing on $d$-ideals. The present treatment of $\CR$ owes much to that result. 
\end{remark}


\section{The class $\M$ in $\Wstar$}

We exhibit a few ways of saying $G \in \M$ within $\Wstar$.

(Of which there are many: \ref{1.6} shows $\M = \Y \cap \CR$, and Section 2 gives 5 ways of saying $G \in \Y$, Section 3 gives eight ways of saying $G \in \CR$, and  $5 \times 8 = 40$.)

\begin{corollary}\label{4.1}
For $G \in \Wstar$, the following are equivalent. 
\begin{itemize}
\item[(1)] $G \in \M$.
\item[(2)] $\forall g \in G$, $G(g)$ is a $d$-ideal.
\item[(3)] $\forall f,g \in G$, if $\coz(f) \subseteq \overline{\coz(g)}$, then $f \in G(g)$.
\item[(4)] $\forall g \in G$, $G/G(g)$ is Archimedean, and in $G$ every weak unit is strong.
\end{itemize}
\end{corollary}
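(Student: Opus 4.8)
The plan is to prove the corollary as three separate equivalences, $(1)\Leftrightarrow(2)$, $(1)\Leftrightarrow(3)$, and $(1)\Leftrightarrow(4)$, each reduced to material already established. Two tools carry almost all the weight: the chain $G(g) \subseteq g^{\pperp}$ of \ref{1.4}(g), and the observation (preceding \ref{3.2}) that every principal polar $g^{\pperp}$ is a $d$-ideal. Throughout I would keep in mind that $G \in \M$ means exactly $G(g) = g^{\pperp}$ for all $g \in G$. For $(1)\Rightarrow(2)$, if $G(g) = g^{\pperp}$ then $G(g)$ is a polar, hence a $d$-ideal. For the converse $(2)\Rightarrow(1)$, assuming each $G(g)$ is a $d$-ideal, I would apply the $d$-ideal property to the element $g \in G(g)$ to obtain $g^{\pperp} \subseteq G(g)$; combining this with $G(g) \subseteq g^{\pperp}$ yields equality, so $G \in \M$.

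For $(1)\Leftrightarrow(3)$ I would invoke the polar description recorded in the proof of \ref{1.4}(d), namely $g^{\pperp} = \{ f \in G \mid \coz(f) \subseteq \overline{\coz(g)} \}$. Under this identification, condition $(3)$ reads precisely ``$g^{\pperp} \subseteq G(g)$ for every $g$''; together with the always-valid reverse inclusion $G(g) \subseteq g^{\pperp}$, this is equivalent to $G(g) = g^{\pperp}$, i.e.\ to $G \in \M$. For $(1)\Leftrightarrow(4)$ I would factor through $\M = \Y \cap \CR$ (\ref{1.6}(c)) and translate each factor separately: by \ref{2.1} (in the form $(1)\Leftrightarrow(4)$), $G \in \Y$ is equivalent to ``$G/G(g)$ is Archimedean for all $g$'', while by \ref{3.3} (in the form $(1)\Leftrightarrow(3)$), $G \in \CR$ is equivalent to ``every weak unit of $G$ is strong''. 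The conjunction of these two statements is exactly condition $(4)$, so $G \in \M$ iff $(4)$ holds.

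I expect the only step requiring a genuine, if tiny, argument is the converse half of $(1)\Leftrightarrow(2)$: one must notice that the hypothesis ``$G(g)$ is a $d$-ideal'', applied to the element $g \in G(g)$, supplies exactly the inclusion $g^{\pperp} \subseteq G(g)$ that the general theory (which only gives $G(g) \subseteq g^{\pperp}$) does not provide for free. Everything else is bookkeeping over \ref{1.4}, \ref{2.1}, \ref{3.3}, and \ref{1.6}, with the polar description of $g^{\pperp}$ doing the translating in $(3)$; no new topology or algebra beyond those results is needed.
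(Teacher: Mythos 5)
Your proof is correct, and for items (2) and (4) it is essentially the paper's own argument: for (2), applying the $d$-ideal property to the element $g \in G(g)$ yields $g^{\pperp} \subseteq G(g)$, which together with $G(g) \subseteq g^{\pperp}$ from \ref{1.4}(g) gives equality, and conversely a polar is always a $d$-ideal; for (4), one reads off the conjunction of \ref{2.1}(4) and \ref{3.3}(3) via $\M = \Y \cap \CR$. The only place you diverge is (3): the paper identifies (3) as the conjunction of \ref{2.1}(5) (the $\Y$ condition) and \ref{3.1}(R2) (the $\CR$ condition) and again invokes $\M = \Y \cap \CR$, whereas you use the description $g^{\pperp} = \{ f \in G \mid \coz(f) \subseteq \overline{\coz(g)} \}$ recorded in the proof of \ref{1.4}(d) to read (3) directly as ``$g^{\pperp} \subseteq G(g)$ for all $g$,'' which with \ref{1.4}(g) is exactly $G \in \M$. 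Both are one-line reductions to prior results; your version is marginally more direct for this item since it bypasses the $\Y$/$\CR$ factorization, while the paper's keeps all four items uniformly tied to the $\M = \Y \cap \CR$ decomposition. No gaps.
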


\begin{proof}
(2) says $g^{\pperp} \subseteq G(g)$, thus $g^{\pperp} = G(g)$ by \ref{1.4}.

(3) is \ref{2.1}(5) plus \ref{3.1}(R2).

(4) is \ref{2.1}(4) plus \ref{3.3}(3).
\end{proof}

Now we relate $\M$ to two well-studied  properties (see \cite{D95}, inter alia).

\begin{defnfacts}\label{4.2}
(a) $G$ is hyperarchimedean $\HA$ if every quotient of $G$ is Archimedean.

$G$ is projectable $\Proj$ if each $g^{\pperp}$ is a $\ell$-group direct summand.

(b) For $G \in \Wstar$, as usual taking the ``Yosida view" $G \leq C(YG)$:
\begin{itemize}
\item[(i)] $G \in \HA$ if and only if each $\coz(g)$ is closed (see \cite{HK04}). \\
(One sees $\HA \subseteq \M$, e.g., by \ref{4.1}.)
\item[(ii)] $G \in \Proj$ if and only if each $\overline{\coz(g)}$ is open (see \cite{HM96}).
\end{itemize}
\end{defnfacts}

Bigard \cite{Bigard68} notes that in general $\ell$-groups $\Proj  \cap \M = \HA$.  This sharpens in $\Wstar$.

\begin{corollary}
In $\Wstar$, $\Proj \cap \CR = \HA$.
\end{corollary}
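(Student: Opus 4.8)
The plan is to work entirely through the topological translations recorded in \ref{4.2}(b), together with the regular-open characterization of $\CR$ from \ref{3.1}. Taking the Yosida view $G \le C(YG)$, recall that $G \in \HA$ says every $\coz(g)$ is closed, $G \in \Proj$ says every $\overline{\coz(g)}$ is open, and $G \in \CR$ says every $\coz(g)$ is regular open, i.e.\ $\coz(g) = \inte\overline{\coz(g)}$. Thus the equality $\Proj \cap \CR = \HA$ reduces to a single pointwise topological equivalence, applied simultaneously to all the (open) cozero-sets $\coz(g)$: for an open set $U$,
\[
\big(\,\overline{U}\ \text{open and}\ U = \inte\overline{U}\,\big) \iff U\ \text{is closed}.
\]

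For the inclusion $\HA \subseteq \Proj \cap \CR$, I would argue as follows: if $U = \coz(g)$ is closed then, since cozero-sets are open, $U$ is clopen; hence $\overline{U} = U$ is open (giving $\Proj$) and $\inte\overline{U} = \inte U = U$ (giving $\CR$). I would then remark that this inclusion is in fact already immediate from Bigard's $\Proj \cap \M = \HA$ \cite{Bigard68} together with $\M \subseteq \CR$ (which holds since $\M = \Y \cap \CR$ by \ref{1.6}(c)); so the genuinely new content is the reverse inclusion.

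For $\Proj \cap \CR \subseteq \HA$, fix $g$ and put $U = \coz(g)$. By $\Proj$, $\overline{U}$ is open, whence $\inte\overline{U} = \overline{U}$ (the interior of an open set is the set itself). By $\CR$, $U = \inte\overline{U}$. Combining these gives $U = \overline{U}$, so $U$ is closed; as $g$ was arbitrary, $G \in \HA$.

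There is no real obstacle here: once the three classes are phrased topologically, the argument is the one-line observation that a regular-open set whose closure is open must already equal that closure. The only care needed is to invoke each translation correctly — in particular to use the $\CR$ hypothesis in its regular-open form $\coz(g) = \inte\overline{\coz(g)}$ (\ref{3.1}(R3)/(R4) and \ref{1.4}(e)), rather than in its $d$-ideal or weak/strong-unit guises from \ref{3.3}.
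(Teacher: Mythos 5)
Your proof is correct and is exactly the argument the paper compresses into ``Evident using \ref{4.2}(b)'': translate $\HA$, $\Proj$, and $\CR$ into the topological conditions on $\coz(g)$ and observe that an open set $U$ with $\overline{U}$ open and $U=\inte\overline{U}$ must satisfy $U=\overline{U}$, with the converse being the clopen case. No issues.
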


\begin{proof}
Evident using \ref{4.2}(b).
\end{proof}


\section{Some examples}

Within $\Wstar$, we shall: (1) indicate where $C(K)$ is in one class or another; (2) give examples showing that $\M$ is neither $\Y$ nor $\CR$ (the $\M = \Y \cap \CR$ is more or less sharp); (3) give examples in $\M$ (especially not $\HA$).

\begin{theorem}\label{5.1}
Suppose $K$ is compact.
\begin{itemize}
\item[(a)] The following are equivalent:
\begin{itemize}
\item[(i)] $C(K) \in \HA$;
\item[(ii)] $C(K) \in \Y$;
\item[(iii)] $C(K) \in \M$;
\item[(iv)] $K$ is finite.
\end{itemize}
\item[(b)] $C(K) \in \CR$ if and only if $K$ is an almost-$P$-space ($\equiv$ every nonempty zero-set has nonempty interior, or, is regular-closed (see \cite{L77}).
\end{itemize}
\end{theorem}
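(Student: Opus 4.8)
The plan is to prove (a) through the cycle $(\mathrm{iv})\Rightarrow(\mathrm{i})\Rightarrow(\mathrm{iii})\Rightarrow(\mathrm{ii})\Rightarrow(\mathrm{iv})$, in which the three forward steps are essentially free. If $K$ is finite it is discrete, so every $\coz(g)$ is clopen, hence closed, whence $C(K)\in\HA$ by \ref{4.2}(b)(i); and the inclusions $\HA\subseteq\M\subseteq\Y$ give $(\mathrm{i})\Rightarrow(\mathrm{iii})\Rightarrow(\mathrm{ii})$, the first inclusion being noted in \ref{4.2}(b)(i) and the second coming from $\M=\Y\cap\CR$ (\ref{1.6}(c)). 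Everything therefore reduces to the one substantive implication $(\mathrm{ii})\Rightarrow(\mathrm{iv})$, which I would prove in contrapositive form: if $K$ is infinite, then $C(K)\notin\Y$. By \ref{2.1} it suffices to produce $f,g\in C(K)$ with $\coz(f)\subseteq\coz(g)$ but $f$ not in the principal ideal $C(K)(g)$, so that $C(K)(g)\subsetneq I(Z(g))$.

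For the construction, first note that an infinite compact Hausdorff space has a non-isolated point $p$ (an infinite discrete space is not compact), and from this one extracts a sequence $V_1,V_2,\dots$ of pairwise disjoint nonempty open sets by repeatedly separating a fresh point of a shrinking neighborhood of $p$ from $p$ itself. Pick $y_n\in V_n$ and, by Urysohn, bump functions $g_n\colon K\to[0,1]$ with $g_n(y_n)=1$ and $\coz(g_n)\subseteq V_n$; then set $g=\sum_n 2^{-n}g_n$ and $f=\sum_n 2^{-n/2}g_n$. The delicate point---and the step I expect to be the main obstacle---is the continuity of these infinite sums for a possibly non-metrizable $K$ (e.g. $K=\beta\N$, where $\{p\}$ need not be a zero-set and no nontrivial sequence converges). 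This is handled uniformly rather than pointwise: since $\sum 2^{-n}$ and $\sum 2^{-n/2}$ converge, the Weierstrass $M$-test makes $f,g$ continuous with no local-finiteness hypothesis on $\{V_n\}$. Because the supports are disjoint, $\coz(f)=\coz(g)=\bigcup_n\coz(g_n)$, so $\coz(f)\subseteq\coz(g)$; and evaluating at $y_n$ gives $f(y_n)/g(y_n)=2^{n/2}\to\infty$, so no inequality $\lvert f\rvert\le M\lvert g\rvert$ can hold and $f\notin C(K)(g)$. This yields $(\mathrm{ii})\Rightarrow(\mathrm{iv})$ and completes (a).

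For (b), \ref{3.1} ((R1)$\Leftrightarrow$(R3)) already gives that $C(K)\in\CR$ iff every zero-set is regular closed, i.e. $Z(g)=\cl\Int Z(g)$; it remains only to identify this with the almost-$P$ condition that every nonempty zero-set have nonempty interior. One direction is immediate: if each $Z(g)$ is regular closed and $Z(g)\neq\emptyset$, then $\Int Z(g)\neq\emptyset$, for otherwise $Z(g)=\cl\emptyset=\emptyset$. For the converse, assume almost-$P$ and suppose some $x\in Z(g)\setminus\cl\Int Z(g)$; by Urysohn choose $h\colon K\to[0,1]$ with $h(x)=1$ and $h\equiv 0$ on $\cl\Int Z(g)$, and form the zero-set $A=Z(g)\cap Z\big((\tfrac12-h)\vee 0\big)$. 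Then $x\in A$, so $A\neq\emptyset$, while any open subset of $A$ is contained in $\Int Z(g)$, where $h=0<\tfrac12$, contradicting $h\ge\tfrac12$ on $A$; hence $\Int A=\emptyset$, contradicting almost-$P$. So no such $x$ exists, $Z(g)$ is regular closed for every $g$, and (b) follows.
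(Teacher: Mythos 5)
Your proof is correct, but the one substantive step in (a) is handled by a genuinely different argument than the paper's. Both proofs get (iv) $\Rightarrow$ (i) $\Rightarrow$ (iii) $\Rightarrow$ (ii) for free from $\HA \subseteq \M \subseteq \Y$; the difference is in showing that an infinite $K$ forces $C(K) \notin \Y$. The paper uses a square-root trick: take $g > 0$ with $\coz(g)$ not closed (easily found at a non-isolated point), observe $Z(g^2) = Z(g)$ so $g \in I(Z(g^2))$, while $g \leq n g^2$ would give $\coz(g) = \{ x \mid g(x) \geq 1/n \}$, a closed set; hence $g \notin G(g^2)$, $G(g^2) \neq I(Z(g^2))$, and \ref{2.1}(2) fails. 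You instead build the witnessing pair from scratch: disjoint bump functions $g_n$ supported in pairwise disjoint open sets accumulating at a non-isolated point, summed with weights $2^{-n}$ and $2^{-n/2}$ so that $\coz(f) = \coz(g)$ while $f/g$ is unbounded, violating \ref{2.1}(5). Both are sound. The paper's route is shorter and exploits the ring structure of $C(K)$ (availability of $g^2$ and its square root), which is exactly why it does not generalize to arbitrary $\Wstar$-objects; yours is self-contained, makes the failure of the principal-ideal condition completely explicit, and correctly identifies and resolves the only delicate point (continuity of the series on a possibly non-metrizable $K$, via the Weierstrass $M$-test rather than any local finiteness of the $V_n$). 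For (b) the paper says only ``Clear,'' since \ref{3.1}(R3) is verbatim the regular-closed formulation of almost-$P$ given in the statement; your additional verification that the two quoted forms of almost-$P$ agree (via the zero-set $Z(g) \cap \{ h \geq \tfrac{1}{2} \}$, which is nonempty with empty interior) is correct and is essentially the argument in Levy's paper, so it is a harmless, if strictly unnecessary, supplement.
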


\begin{proof}
(a) If $K$ is finite, then all subsets are closed, thus all cozero-sets, so $C(K) \in \HA \subseteq \M \subseteq \Y$. If $K$ is not finite, a non-closed $\coz(g)$ is easily found (so $C(K) \notin \HA$), and then (take $g > 0$), $G(g^2)\neq I(Z(g^2))$ (with $\sqrt{g^2} = g$), so \ref{2.1}(2) fails and $C(K) \notin \Y$.

(b) Clear.
\end{proof}

Several examples to follow use the one-point compactification of $\N$, denoted $\aN = \N \cup \{ \alpha \}$.

\begin{example}\label{5.2}
$\Y \nRightarrow \CR$.

We give two (types of) examples.

(1) Let $G \leq C(\aN)$ be generated by $F \equiv \{ f \in C(\aN) \mid f(\aN) \mbox{ finite}\}$ and 
\[
v(x) = \left \{
\begin{array}{ll}
\frac{1}{x} & x \in \N \\
0 & x = \alpha.
\end{array}
\right.
\]
Note  that $g \in G$ can be put $(\ast)$ $g \dot{=} r + sv$ for some $r,s \in \R$, with $\dot{=}$ denoting ``eventually equal on $\N$". Here, $\coz(v) = \N$, so $v$ is a weak unit, not strong. Thus $G \notin \CR$. That $G \in \Y$ can be checked using any of \ref{2.1}, with some calculating using $(\ast)$.

(2) (A simplification of \cite[Example 5.5]{MZ06}). Let $G \leq C([0, +\infty])$ consist of the $g$  which are ``finitely piecewise linear" ($[0,+\infty] = [x_0,x_1] \cup [x_1, x_2] \cup \cdots \cup [x_{n-1},x_n]$, with $x_0 = 0$, $x_n = +\infty$, and $g$ linear on each $[x_i,x_{i+1}]$). Take $g \in G$ such that
\[
g(x) = \left \{
\begin{array}{ll}
x & x \in [0,1] \\
1 & x \in [1, +\infty].
\end{array}
\right.
\]
Then $g$ is a weak unit that is not strong, which shows that $G \notin \CR$. That $G \in \Y$ can be checked using \ref{2.1}(2) and some arithmetic.
\end{example}

\begin{example}\label{5.3}
$\CR \nRightarrow \Y$.

We give two (types of) examples.

(1) $C(K)$ for any $K$ infinite and almost-$P$ is $\CR$ not $\Y$, by \ref{5.1}.

(2) (Reconsidering \ref{5.2}(1)) Let $G \leq C(\aN)$ be generated by $F$, $a$, and $b$, where
\[
a(x) = \left \{
\begin{array}{ll}
\frac{1}{x} & x \mbox{ is even} \\
0 & x \mbox{ is odd}
\end{array}
\right.
\]
and
\[
b(x) = \left \{
\begin{array}{ll}
\frac{1}{x^2} & x \mbox{ is even} \\
0 & x \mbox{ is odd}.
\end{array}
\right.
\]
Observe that $g \in G$ can be put 
\[
(\ast) \, g \dot{=} r+sa + tb, \mbox{ for some } r,s,t \in \R,
\]
so that $\coz(g)$ differs by a finite set in $\N$ from either a clopen $U \ni \alpha$, or from the evens. Thus $G \in \CR$. $G \notin \Y$ since $\coz(a)=\coz(b)$ but $a \notin G(b)$.
\end{example}

\begin{example}
Some $G \in \M$.

Of course, $\HA \subseteq \M$, and any $F(K,\R) \in \HA$, and many ``weird" members of $\HA$ are exhibited in \cite{HK04}.

Beyond $\HA$, and resembling \ref{5.2}(1) and \ref{5.3}(2): let $G \leq C(\aN)$ be generated by $F$ and 
\[
a(x) = \left \{
\begin{array}{ll}
\frac{1}{x} & x \mbox{ is even} \\
0 & x \mbox{ is odd}.
\end{array}
\right.
\]
As before, $G \in \CR$. Also, $G \in \Y$ is visible from \ref{2.1}(5). So $G \in \M$.

(This kind of example can be manufactured replacing $\aN$ by any $K$ compact zero-dimensional with a proper dense cozero-set $S$. $S$ will be a disjoint union $\bigcup_n U_n$, with the $U_n$'s clopen. Then, ``split" $\{ U_n \}$ into ``evens and odds", and define $a$ accordingly.
\end{example}

\begin{question}
In $\Wstar$, what does $G \in \M$, $\Y$, or $\CR$ entail about the Yosida space $YG$?
\end{question}



\end{document}